\documentclass[final,3p,times]{elsarticle}
\usepackage{amsmath,latexsym,amssymb,amsfonts,amsbsy}
\usepackage{tikz}
\usetikzlibrary{arrows,positioning}
\usetikzlibrary{quotes,angles}
\usetikzlibrary{calc}
\usetikzlibrary{decorations.pathreplacing,calligraphy}
\usepackage{epstopdf}
\usepackage{amsthm, bm}
\usepackage{cases}
 \usepackage{mathrsfs}
 \usepackage{graphicx}
\usepackage{graphicx,color}
\usepackage{graphicx,graphics}
\DeclareGraphicsExtensions{.eps,.bmp}
\usepackage{CJK}
\usepackage{subfigure}
\usepackage{indentfirst}%
\usepackage{verbatim}%commet
\usepackage{multirow}
\usepackage{booktabs}
\usepackage[ruled,vlined]{algorithm2e}
\usepackage{float}
\allowdisplaybreaks[4]

\usepackage{lineno}
\newtheorem{theorem}{Theorem}

\newtheorem{remark}{Remark}

\newtheorem{lemma}{Lemma}

\newtheorem{assumption}{Assumption}

\biboptions{numbers,sort&compress}
\begin{document}
\begin{frontmatter}
%% Title, authors and addresses
%% use the tnoteref command within \title for footnotes;
%% use the tnotetext command for theassociated footnote;
%% use the fnref command within \author or \address for footnotes;
%% use the fntext command for theassociated footnote;
%% use the corref command within \author for corresponding author footnotes;
%% use the cortext command for theassociated footnote;
%% use the ead command for the email address,
%% and the form \ead[url] for the home page:
%% A short note
\title{A high-order rectilinear Lagrangian method  based on the geometric conservation law
}
%%\author{Xiao Li, Jiayin Zhai, Zhijun Shen\corref{cor1}}
%% \title{Title\tnoteref{label1}}
%% \tnotetext[label1]{}
%% \author{Name\corref{cor1}\fnref{label2}}
%% \ead{email address}
%% \ead[url]{home page}
%% \fntext[label2]{}
%% \cortext[cor1]{}
%% \address{Address\fnref{label3}}
%% \fntext[label3]{}
\author{Xun Wang$^{1}$}
%% First author's email
\ead{s151025@muc.edu.cn}

\author{Chengdi Ma\corref{cor1}$^{2}$}
\ead{mcd123@mail.ustc.edu.cn}

 \cortext[cor1]{
Corresponding author at: School of Mathematical Sciences, Peking University, Beijing, 100871, PR China}

% \author{Hongping Guo$^{2,4}$}
% \ead{gguohongping@163.com}
% \author{Zhijun Shen \corref{cor1}$^{2,3}$}
% \cortext[cor1]{Corresponding author:
%   Tel.: 86-10-61935178.}
% \ead{shen\_zhijun@iapcm.ac.cn}
%%\author{Liqi Liu$^{2}$}
%\cortext[cor1]{Corresponding author:
%  Tel.: 86-10-61935178.}
%% Third author's email
%\ead{dai_zihuan@iapcm.ac.cn}
%\tnotetext[tnote1]{This project was supported by the National Natural Science Foundation of China (11971071).}
%\author{Xun Wang$^{1}$}
%\ead{wangxun18@gscaep.ac.cn}
%\author{Zhijun Shen \corref{cor1} $^{1}$}
%\ead{shen\_zhijun@iapcm.ac.cn} \cortext[cor1]{Corresponding
%author. Tel.: 86-10-61935178}
\address{ 
1. Academy of Mathematics and Systems Science,
Chinese Academy of Sciences,  Beijing,
100190, PR China\\
2. School of Mathematical Sciences, Peking University, Beijing 100871,PR  China
%   2. National Key Laboratory of Computational Physics, Institute of Applied Physics and Computational Mathematics,
%  P. O. Box 8009-26, Beijing 100088, China\\
%  3. HEDPS, Center for Applied Physics and Technology, and College of
% Engineering, Peking University, Beijing 100871, China\\
% 4. Faculty of Mathematics, Baotou Teachers' College, Bao tou 014030, China
}
%\title{}
%% use optional labels to link authors explicitly to addresses:
%% \author[label1,label2]{}
%% \address[label1]{}
%% \address[label2]{}
%\author{}
%\address[1]{Graduate School of China Academy of Engineering Physics, P. O. Box 8009, Beijing 100088, P. R. China}
%\address[2]{Institute of Applied Physics and Computational Mathematics, P.O. Box 8009, Beijing 100088, PR China}
\begin{abstract}
This paper presents a mesh moving strategy for high-order Lagrangian method on quadrilateral meshes. The primary evidence of this method stems from principle of area conservative linearization and the asymptotic properties of the velocity. The former strictly adheres to the requirements of geometric conservation laws, while the latter provides a high-order accuracy guarantee.
%The former strictly adheres to the requirements of geometric conservation laws, while the latter  offers a highly accurate theoretical guarantee. 
Two smooth vortex test cases verify the feasibility of the proposed scheme. %Two smooth vortex cases enable us to verify our scheme feasibility. 
\end{abstract}

\begin{keyword}
 Lagrangian method, Geometric conservation law, Asymptotic properties, Conciseness
%% PACS codes here, in the form: \PACS code \sep code
\end{keyword}
\end{frontmatter}

% \linenumbers
% \modulolinenumbers[5]
% \pagewiselinenumbers
% \switchlinenumbers

%% main text
\section{Introduction}
%移动网格
%拉氏方法扮演..., 也是ALE方法的....高阶拉氏...,但是 曲边网格的痛点， 能否够构造直边网格移动
The mesh moving method is an indispensable tool for studying the macroscopic dynamics of fluids with moving boundaries or deforming interfaces~\cite{DuanJ, LiS}. Especially for the Lagrangian methods~\cite{Maire}, shock-capturing schemes are powerful means to track material interfaces and also the cornerstone of Arbitrary Lagrangian-Eulerian methods~\cite{BoscheriM, BoscheriM1}. High-order Lagrangian methods provide more detailed representations of physical fields and have been a research focus in recent years~\cite{DobrevV, FuP}. 

However, high-order schemes often require curvilinear meshes to represent the instantaneous shape of the fluid~\cite{LiuX, MorganN},  which severely complicates the satisfaction of geometric conservation law, degrades mesh quality, and increases computational complexity. It is natural to consider constructing a high-order Lagrangian method on rectilinear moving meshes, which would greatly improve both the compatibility of the algorithm with existing first- or second-order schemes and the simplicity of the algorithm itself. Thus, we propose a third-order rule that determines the positions of values on the edges connected to the nodes and the associated weights. We then apply the improved CAVEAT-type method to obtain the mesh moving velocity.

\section{Geometric conservation law}
In the Lagrangian framework, the variation in time of a control volume needs to satisfy the geometric conservation law (GCL)
\begin{eqnarray}\label{GQL}
\frac{\mathrm{d}}{\mathrm{d} t}\int_{\Omega}\mathrm{d} \Omega- \int_{\partial \Omega} {\bf u}\cdot {\bf n}\mathrm{d} l= 0, 
\end{eqnarray} 
where $\Omega$ is the control volume, ${\bf u}$ is the moving velocity and ${\bf n}$  is the outward normal vector of the boundary $\partial \Omega$ respectively. 

Suppose the initial computational domain $\Omega$ is discretized into $N_c$  quadrilateral  cells $\{\Omega_{c},c=1,\cdots,N_c\}$, and each node in the cell is labeled by an index $q$, whose coordinate is ${\bf x}_{q}$, shown in \ref{fig:grid1}. To describe the relationships between cells and nodes, we define the following sets:
\begin{eqnarray*}
\begin{aligned}
\mathcal{Q}(c) &:= \{ \text{counterclockwise-ordered list of nodes in the cell } \Omega_c \}, \\
\mathcal{N}(q) &:= \{ \text{nodes adjacent to the node } q \}.
% \\
% \mathcal{C}(q) &:= \{ \text{cells that share the node } q \}.
\end{aligned}
\end{eqnarray*}

\begin{figure}[!htp] 
\centering
\subfigure[]{
\label{fig:grid1}
\begin{tikzpicture}[scale=1.2],
\coordinate (q1) at (5.5,3.8);
\coordinate (q11) at (4.5,4.0);
\coordinate (q12) at (5.5,2.9);
\coordinate (q2) at (8.2,5);
\coordinate (q21) at (8.1,4.1);
\coordinate (q22) at (8.8,5.3);
\coordinate (q3) at (7.5,7);
\coordinate (q31) at (7.0,7.5);
\coordinate (q32) at (8.2,7.6);
\coordinate (q4) at (5,6);
\coordinate (q41) at (5.5,6.7);
\coordinate (q42) at (4.2,6.2);
\coordinate (qc) at (6.5,5.5);
\coordinate (e1s1) at (6.6,4.3);
\coordinate (e1s2) at (6.8,3.8);
\coordinate (e2s1) at (8.0,5.5);
\coordinate (e2s2) at (8.5,5.7);
\coordinate (e2s3) at (7.8,6.2);
\coordinate (e2s4) at (7.2,5.9);
\coordinate (O) at (7,2.5);
\draw [color = black, line width =1.4pt]
(q1) -- (q2) -- (q3)-- (q4)--(q1) 
(q12)--(q1) -- (q11) 
(q22)--(q2) -- (q21) 
(q31)--(q3) -- (q32)
(q41)--(q4) -- (q42);
\draw[->,red,line width =1.5pt] (e1s1) -- (e1s2);
\draw[->,red,line width =1.5pt] (e2s1) -- (e2s2);
% \draw[->, brown,line width =1.5pt] (e2s3) -- (e2s4);
\draw[densely dashed,blue,->,line width =1.4pt]
(6.0,5.0) arc (120: 390: 0.3);
\filldraw [black] (q1)circle (2.5pt);
\filldraw [black] (q2)circle (2.5pt);
\filldraw [black] (q3)circle (2.5pt);
\filldraw [black] (q4)circle (2.5pt);
%====Neribourhood cells
\filldraw [black] (q11)circle (2.5pt);
\filldraw [black] (q12)circle (2.5pt);
\filldraw [black] (q21)circle (2.5pt);
\filldraw [black] (q22)circle (2.5pt);
\filldraw [black] (q31)circle (2.5pt);
\filldraw [black] (q32)circle (2.5pt);
\filldraw [black] (q41)circle (2.5pt);
\filldraw [black] (q42)circle (2.5pt);
% \filldraw [black] (q2s1)circle (2.5pt);
%==== edge q^-q=e1
% \filldraw [black] (qc)circle (2.5pt);
\node [below of=qc,xshift=0.0cm,yshift=0.8cm,black]{$\Omega_c$};
% \node [below of=e2s2,xshift=0.5cm,yshift=1.8cm,black]{$\Omega_f$};
\node [below of=q1,xshift=-0.3cm,yshift=0.8cm,black]{$q^-$};
\node [below of=q2,xshift=0.3cm,yshift=0.8cm,black]{$q$};
\node [below of=q3,xshift= 0.1cm,yshift=1.4cm,black]{$q^+$};
\node [below of=q4,xshift=0.0cm,yshift=1.4cm,black]{$q^{++}$};
 % \node [below of=O,yshift=0.70cm,black]{(a) };
%===normal vector
\node [below of=e2s2,xshift=0.8cm,yshift=1.0cm,black]{$
{ \color{red} \mathbf{n}_{qq^+} } $};
\node [below of=e1s2,xshift=0.5cm,yshift=1.0cm,black]{${\color{red} \mathbf{n}_{q^-q}} $};
% \node [below of=e2s4,xshift=0.3cm,yshift=0.8cm,black]{${\color{brown} \bs{n}_c^f} $};
\end{tikzpicture}\qquad
}
\subfigure[]{
\label{fig:grid2}
\begin{tikzpicture}[scale=0.9], 
\coordinate (q1) at (5.5,3.8);
\coordinate (q2) at (8.2,5.2);
\coordinate (q3) at (7.5,7.5);
\coordinate (q4) at (5,6);
\coordinate (q5) at (6.5,2);
\coordinate (q6) at (9,2.5);
\coordinate (q7) at (11.5,3);
\coordinate (q8) at (11.5,6);
\coordinate (q9) at (10.5,8);
\coordinate (qc) at (6.5,5.5);
\coordinate (e1s1) at (6.8,4.5);
\coordinate (e1s2) at (6.6,4.9);
\coordinate (e2s1) at (8.0,6);
\coordinate (e2s2) at (8.5,6.2);
\coordinate (e3s1) at (9.8,5.6);
\coordinate (e3s2) at (9.9,5.1);
\coordinate (e4s1) at (8.5,4.2);
\coordinate (e4s2) at (7.9,4.0);
\coordinate (e5s1) at (6.6,5.8);
\coordinate (e5s2) at (6.2,6.1);
\coordinate (e6s1) at (10,6.5);
\coordinate (e6s2) at (10.2,7.0);
\coordinate (e7s1) at (10.0,4.0);
\coordinate (e7s2) at (10.5,3.6);
\coordinate (e8s1) at (7.1,3.2);
\coordinate (e8s2) at (6.9,2.6);
\coordinate (O) at (9,1);
\draw [color = black, line width =1.4pt]
(q1) -- (q2) -- (q3)-- (q4)--(q1) (q1) --(q5)--(q6)--(q7)--(q8)--(q9)--(q3) (q6)--(q2) -- (q8);
% \draw [color = red, dashed,line width =1.5pt]
% (q1) --(q3)--(q8)--(q6)--(q1);
\draw[->,red,line width =1.5pt] (e1s1) -- (e1s2);
\draw[->,red,line width =1.5pt] (e2s1) -- (e2s2);
\draw[->,red,line width =1.5pt] (e3s1) -- (e3s2);
\draw[->,red,line width =1.5pt] (e4s1) -- (e4s2);
% \draw[->,brown,line width =1.5pt] (e5s1) -- (e5s2);
% \draw[->,brown,line width =1.5pt] (e6s1) -- (e6s2);
% \draw[->,brown,line width =1.5pt] (e7s1) -- (e7s2);
% \draw[->,brown,line width =1.5pt] (e8s1) -- (e8s2);
\draw[densely dashed,blue,->,line width =1.4pt]
(8.8,5.1) arc (360: 80: 0.6);
\filldraw [black] (q1)circle (2.5pt);
\filldraw [black] (q2)circle (2.5pt);
\filldraw [black] (q3)circle (2.5pt);
\filldraw [black] (q4)circle (2.5pt);
\filldraw [black] (q5)circle (2.5pt);
\filldraw [black] (q6)circle (2.5pt);
\filldraw [black] (q7)circle (2.5pt);
\filldraw [black] (q8)circle (2.5pt);
\filldraw [black] (q9)circle (2.5pt);
%====Neribourhood cells
% \filldraw [black] (q11)circle (2.5pt);
% \filldraw [black] (q12)circle (2.5pt);
% \filldraw [black] (q21)circle (2.5pt);
% \filldraw [black] (q22)circle (2.5pt);
% \filldraw [black] (q31)circle (2.5pt);
% \filldraw [black] (q32)circle (2.5pt);
% \filldraw [black] (q41)circle (2.5pt);
% \filldraw [black] (q42)circle (2.5pt);
% \filldraw [black] (q2s1)circle (2.5pt);
%==== edge q^-q=e1
% \filldraw [black] (qc)circle (2.5pt);
\node [below of=qc,xshift=-0.5cm,yshift=0.8cm,black]{$\Omega_c$};
\node [below of=e2s2,xshift=0.5cm,yshift=2.0cm,black]{$\Omega_{c^+}$};
\node [below of=qc,xshift=1.2cm,yshift=-1.7cm,black]{$\Omega_{c^-}$};
\node [below of=e2s2,xshift=1.5cm,yshift=-1.9cm,black]{$\Omega_{c^{++}}$};
\node [below of=q1,xshift=-0.4cm,yshift=0.7cm,black]{$q^4$};
\node [below of=q2,xshift=0.3cm,yshift=0.8cm,black]{$q$};
\node [below of=q3,xshift= 0.1cm,yshift=1.4cm,black]{$q^1$};
 \node [below of=q8,xshift=0.2cm,yshift=1.4cm,black]{$q^{2}$};
\node [below of=q6,xshift=0.2cm,yshift=0.8cm,black]{$q^{3}$};
 % \node [below of=O,yshift=0.70cm,black]{(b) };
%===normal vector
\node [below of=e2s2,xshift=0.3cm,yshift=1.1cm,black]{$
{ \color{red} \mathbf{n}_1 } $};
\node [below of=e1s2,xshift=0.4cm,yshift=1.1cm,black]{${\color{red} \mathbf{n}_{4}} $};
\node [below of=e3s2,xshift=0.4cm,yshift=1.1cm,black]{${\color{red} \mathbf{n}_{2}} $};
\node [below of=e4s2,xshift=0.3cm,yshift=0.75cm,black]{${\color{red} \mathbf{n}_3} $};
% \node [below of=e5s2,xshift=0.3cm,yshift=1.25cm,black]{${\color{brown} \bs{n}_{qc}} $};
% \node [below of=e6s2,xshift=0.38cm,yshift=1.1cm,black]{${\color{brown} \bs{n}_{qc^+}} $};
% \node [below of=e7s2,xshift=0.5cm,yshift=0.9cm,black]{${\color{brown} \bs{n}_{qc^{++}}} $};
% \node [below of=e8s2,xshift=0.1cm,yshift=0.75cm,black]{${\color{brown} \bs{n}_{qc^-}} $};
\end{tikzpicture}
}
\caption{Geometrical notation. (a) Primary cell $\Omega_c$; (b) The normal vectors of the edges connected to node $q$ are aligned in clockwise order.} \label{fig:grid}
\end{figure}

Based on Lagrangian assumption, the mass of the cell $\Omega_c$, defined as $m_c: = \int _{\Omega_c} \rho \mathrm{d}\Omega$ remains unchanged over time.  
Then, let us discretize Eqs.(\ref{GQL}) using a finite-volume numerical scheme on the cell $\Omega_{c}$:
 \begin{eqnarray} \label{GQL_dis}
|\Omega_{c}^{n+1}| = |\Omega_{c}^{n}| + \frac{\Delta t}{2}  \sum_ {q \in \mathcal{Q}(c)}  \left(l_{qq^+} {\bf n}_{qq^+} +l_{q^-q} {\bf n}_{q^-q}  \right ) \cdot {\bf u}_q^*,
\end{eqnarray}
where  $\Delta t := t^{n+1}-t^n$ is the time interval, $l_k,(k=q^-q,qq^+)$ is the length of the cell edge $k$ at time $t^n$. Here  ${\bf u}_{q}^{*}$ represents the velocity on the vertex $q$. The readers may refer to Ref.~\cite{WangX} 

% By now, we consider a smooth physical field. 
The cell density $\rho_{c}$ and specific volume are
\begin{equation}
\rho_{c}^{n+1} = \frac {m_{c}}{|\Omega_{c}^{n+1}|}, \quad \tau_{c}^{n+1} =  \frac {|\Omega_{c}^{n+1}|} {m_{c}} = \frac {1}{\rho_{c}^{n+1}}. 
\end{equation}

\subsection{Mesh moving method}
In hydrodynamics, the CAVEAT algorithm offers a mesh evolution strategy that alters the mesh in a Lagrangian style~\cite{WangX}. In general, shown in \ref{fig:grid2}, the nodal velocity ${\bf u}_q^*$ can be obtained by minimizing the following quadratic functional for each vertex $q$,
\begin{eqnarray} \label{CAVEAT}
%Func({\bf u}_q^* )
{F({\bf u}_q^*)}=\sum_{q' \in \mathcal{N}(q)} l_{k}
({{\bf u}_q^*\cdot } {\bf n}_{k}-S_{k}^*)^2,k=qq',
\end{eqnarray}
where $S_k^*$ is the contact discontinuity velocity of boundary $l_k$. We solve the Eq. (\ref{CAVEAT}) to get the vertex velocity ${{\bf u}_q^*}$,

\begin{eqnarray} \label{CAVEAT_node}
%Func({\bf u}_q^* )
{{\bf u}_q^*}= \mathcal{M}_q\sum_{q' \in \mathcal{N}(q)} l_{k}
\alpha_kS_k^*{\bf n}_{k} ,k=qq',
\end{eqnarray}
where $\alpha_k$ is the acoustic impedance, $\mathcal{M}_q$ is nodal  matrix
\begin{equation}
 \mathcal{M}_q=\left[
\begin{array}{cc}
\sum_{q' \in \mathcal{N}(q)} l_{k}
\alpha_k n_{x,k}^2 &  \sum_{q' \in \mathcal{N}(q)} l_{k}
\alpha_k n_{x,k}n_{y,k}\\
\sum_{q' \in \mathcal{N}(q)} l_{k}
\alpha_k n_{y,k}n_{x,k} & \sum_{q' \in \mathcal{N}(q)} l_{k}
\alpha_k n_{y,k}^2
\end{array}
\right].
\end{equation}
Here we set $\alpha_k=1$ without considering shock situation.
The node position at the next time is obtained
 \begin{eqnarray}\label{mesh_move}
  {\bf x}_q^{n+1}={\bf x}_q^{n} + \Delta t {\bf u}_{q}^*.
 \end{eqnarray}
Each cell volume  of  $|\Omega_{c}^{n+1}|$ is evaluated directly from the updated vertex coordinates.

\subsection{CFL-condition}
Time step  limitations: Maire et al.'s time algorithm\cite{}.
Define the next time step $\Delta t^{n+1} =t^{n+1}-t^{n}$.  It is given  $\Delta t^{n+1} = \min\{\Delta t_{E},\Delta t_{V}, \Delta t_{M}\}$ with
\begin{equation} \label{CFL}
\Delta t_{E}=C_{E} \min _{c} \frac{\lambda_c}{a_{max}}, \quad \Delta t_{V}=C_{V} \min _{c}\left(
\frac{\left|\Omega_{c}^n\right|}{\left|\frac{d}{d t}\Omega_{c}^n \right |}\right),\quad  \Delta t_{M}=C_M \Delta t^n.
\end{equation}
% where  $C_{E}=0.5$, $C_{v}=0.1$, $C_{M}=1.01$ are constants. 
Here $\Delta t^n$ is the current time step. At the initial time  $\Delta t^0 = 10^{-8}$. 
$C_{E}=0.5$ is  a CFL-like criterion  in order to ensure a
positive entropy production in cell $\Omega_c$ during the time step.
$\lambda_{c} = \sup_{\bm x,\,\bm y \in \Omega_c^n} \|\bm x - \bm y\|$ is the diameter of $\Omega_c^n$, and $a_{max}^n$ is the maximum acoustic speeds of fast wave across boundaries, $C_{V}=0.1$ is the criterion on the variation of volume  which limits the relative change in the cell volume to be less than $0.1$.
 $C_{M}=1.01$ is the coefficient  that allows the time step to increase.

\section{High-order mesh moving strategy}\label{sece3}

\subsection{Principle of area-conservative linearization} \label{area-correction}
\begin{lemma}\label{lemma_1}
Let $v(x)$ be a continuous velocity function defined on the interval $[a, b]$, with a known definite integral (accumulated quantity or area) denoted as $S_{target}$. If a linear function $L(x) = kx + c$ is constructed to approximate $v(x)$ subject to the following two geometric constraints:
\begin{enumerate}
    \item \textbf{Slope Preservation Constraint:} The slope of the linear function $L(x)$ is identical to the slope of the secant line connecting the endpoints of the original function, i.e., $k = \frac{v(b) - v(a)}{b - a}$.
    \item \textbf{Area Conservation Constraint:} The definite integral of the linear function $L(x)$ over the interval $[a, b]$ is equal to $S_{target}$.
\end{enumerate}
Then, the endpoint values of the linear function, denoted as $v_a' = L(a)$ and $v_b' = L(b)$, are uniquely determined by:
\begin{equation}
    v_a' = v(a) + \Delta S,\quad 
    v_b' = v(b) + \Delta S,
\end{equation}
where $\Delta S$ is the area correction term, given by:
\begin{equation}
    \Delta S = \frac{S_{target}}{b - a} - \frac{v(a) + v(b)}{2}.
\end{equation}
\end{lemma}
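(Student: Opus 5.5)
The plan is to parametrize the admissible linear functions by their slope and one free constant, and let the two constraints fix both. The Slope Preservation Constraint already fixes $k=\frac{v(b)-v(a)}{b-a}$ (we need $a<b$ so this is defined). So every admissible $L$ differs from the secant line
\[
\ell(x)=v(a)+k(x-a)
\]
by a constant, i.e.\ $L(x)=\ell(x)+\delta$ for some $\delta\in\mathbb{R}$. Writing $L(x)=kx+c$, this is the same as fixing $c$. It follows at once that $L(a)=v(a)+\delta$ and $L(b)=v(b)+\delta$, which is already the claimed form of $v_a'$ and $v_b'$. What remains is to show that $\delta$ is uniquely determined and equals $\Delta S$.

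Next I impose the Area Conservation Constraint. Since $\ell$ is linear, its integral is given exactly by the trapezoidal rule:
\[
\int_a^b \ell(x)\,dx=(b-a)\frac{v(a)+v(b)}{2}.
\]
Hence
\[
\int_a^b L\,dx=(b-a)\frac{v(a)+v(b)}{2}+(b-a)\delta .
\]
Setting this equal to $S_{target}$ gives a linear equation in $\delta$ with coefficient $b-a\neq 0$. It therefore has the unique solution
\[
\delta=\frac{S_{target}}{b-a}-\frac{v(a)+v(b)}{2}=\Delta S.
\]
Substituting back gives the endpoint formulas. Uniqueness follows because $k$ is forced by the first constraint and $\delta$ (equivalently $c$) by the second, so exactly one linear function satisfies both.

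There is no real obstacle here. The only points needing care are the nondegeneracy assumption $a<b$ and the observation that the trapezoidal rule is exact for linear functions. Continuity of $v$ is used only to ensure that $v(a)$ and $v(b)$ are defined and that $S_{target}$ makes sense as an integral. The proof never otherwise uses the relation between $S_{target}$ and $v$, so the result holds for any prescribed target value.
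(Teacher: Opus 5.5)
Your proof is correct and follows essentially the same route as the paper: the slope constraint makes $L$ a vertical translate of the secant line, and the area constraint, evaluated exactly by the trapezoidal rule, fixes the translation as $\Delta S$. Your explicit remarks on uniqueness and on the need for $a<b$ make precise what the paper leaves implicit.
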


\begin{proof}
Let the endpoint values of $L(x)$ be $v_a'$ and $v_b'$. 
From the Slope Preservation Constraint, the slope of the new line $k_{new}$ must equal the slope of the original secant $k_{old}$:
\begin{equation}
    \frac{v_b' - v_a'}{b - a} = \frac{v(b) - v(a)}{b - a} \implies v_b' - v_a' = v(b) - v(a).
\end{equation}
This implies that the difference between endpoint values remains constant, meaning $L(x)$ is obtained by a vertical translation of the original secant line $L_{old}(x)$. Let the translation magnitude be $\Delta S$, such that:
\begin{equation}
    v_a' = v(a) + \Delta S, \quad v_b' = v(b) + \Delta S.
\end{equation}
From the Area Conservation Constraint, the trapezoidal area must equal $S_{target}$:
\begin{equation}
    \int_{a}^{b} L(x) \, dx = \frac{v_a' + v_b'}{2} (b - a) := S_{target}.
\end{equation}
Solving for $\Delta S$:
\begin{equation}
    \Delta S = \frac{S_{target}}{b - a} - \frac{v(a) + v(b)}{2},
\end{equation}
which completes the proof.
\end{proof}
% \Delta S 是边 ab 的速度。可能统一改成 \Delta S_k 更好，符号和 Section 2 统一。
Since the trapezoidal formula has only second-order accuracy, we require a higher-precision method to compute $S_{target}$. Here, we employ the Gauss-Lobatto and Gauss-Legendre quadrature formulas, both of which have third-order algebraic accuracy. From these, we obtain corrections at the edge endpoints $a$, and we summarize these corrections---expressed in terms of position and velocity---in Table.\ref{table_guass}.

% 本质上通过增加节点上的自由度，利用节点多项式来描述界面的信息，避免高精度自由的变多.
 % we put forward a third-order rule for selecting quadrature points and assigning weights for numerical integration under a moving mesh framework.
\begin{table}[htbp]
\centering\label{t11}
\small
\renewcommand{\arraystretch}{1.2}
\setlength{\tabcolsep}{5pt}
\caption{Comparison of Gauss-type quadrature rules with algebraic precision 3 for computing $v_a'$.}\label{table_guass}
\begin{tabular}{@{} l p{6.5cm} c @{}}
\toprule
\textbf{Method} & \textbf{Node positions on $[a,b]$} & \textbf{$v_a'$} \\
\midrule
Three-point Gauss--Lobatto &
$x_1=a,\; x_2=\frac{a+b}{2},\; x_3=b$ &
$ \frac{2}{3}v(a)+\frac{2}{3}v(\frac{a+b}{2})-\frac{1}{3}v(b)$ \\
\addlinespace
Two-point Gauss--Legendre &
$a,\; t_1=\frac{a+b}{2}-\frac{b-a}{2\sqrt{3}},\; t_2=\frac{a+b}{2}+\frac{b-a}{2\sqrt{3}},\; b$ &
$ \frac{1}{2}v(a)+\frac{1}{2}v(t_1)+\frac{1}{2}v(t_2)-\frac{1}{2}v(b)$ \\
\bottomrule
\end{tabular}
\end{table}

\subsection{Asymptotic properties of the nodal velocity correction}
Building upon the edge-based velocity corrections $\Delta S$, we now construct the corrected nodal velocity $\mathbf{u}_q^*$. For a node $q$, each adjacent edge $k=qq'$ gives a desired normal component of the corrected nodal velocity:
\begin{equation}
\mathbf{u}_q^*\cdot\mathbf{n}_k = u_{q,k} + \Delta S_{k,q}.
\end{equation}
where $u_{q,k}=\mathbf{u}_q\cdot\mathbf{n}_k$ is the exact normal velocity of edge $k$ at node $q$  and $\Delta S_{k,q}$ is the area correction term from {\bf Lemma \ref{lemma_1}}. In general, a node $q$ has more than two adjacent edges whose outward normals ${\mathbf{n}_k}$ are not collinear, so the system of constraints is overdetermined and cannot be satisfied simultaneously. Following the least-squares framework of Eq. (\ref{CAVEAT}), we seek $\mathbf{u}_q^*$ that minimizes the weighted residual:
% Since the normals of different edges are not collinear, these requirements cannot be satisfied simultaneously in general.
% Recall Eqs.(\ref{CAVEAT}), then let each edge $S^*= u_q + \Delta S_{k,q}$, ones obtain 

\begin{eqnarray} \label{CAVEAT_modify}
\min_{\mathbf{u}_q^*} \sum_{q'\in\mathcal{N}(q)} l_k \left( \mathbf{u}_q^*\cdot\mathbf{n}_k - (u_{q,k} + \Delta S_{k,q}) \right)^2,\quad k=qq'.
\end{eqnarray} 
Setting $\delta\mathbf{u}_q=\mathbf{u}_q^*- \mathbf{u}_q$ and applying the equations (\ref{CAVEAT_node}), the minimization reduces to a $2 \times 2$ linear system:
%By the Eqs.(\ref{CAVEAT_node}), an distilled formula reads 
\begin{eqnarray} \label{CAVEAT_modify1}
\mathcal{M}_q\,\delta\mathbf{u}_q = \mathbf{b}_q, 
\end{eqnarray} 
where the matrix $\mathcal{M}_q = \sum_{q'\in\mathcal{N}(q)} l_k \mathbf{n}_k \mathbf{n}_k^\top$ is the same as in the original CAVEAT algorithm, and the right-hand side $\mathbf{b}_q = \sum_k l_k \Delta S_{k,q} \mathbf{n}_k$ encodes the area correction.

Let us now investigate the asymptotic properties of $\delta\mathbf{u}_q$ under the assumption of a uniform mesh as follows.
\begin{assumption}\label{assump:mesh}
Let $\{\mathcal{G}_h\}_{h>0}$ be a family of uniform quadrilateral meshes with mesh size $h$. There exist constants $C_l, C_\psi, C_\nabla > 0$, independent of $h$, such that for every interior edge $k$ the following bounds hold:
\begin{equation}\label{eq:mesh_prop}
    |l_k - h| \le C_l\,h^3, \qquad
    |\psi_k| \le C_\psi, \qquad
    |\psi_{k'} - \psi_k| \le C_\nabla\,h \quad
    \text{for any edge $k'$ adjacent to $k$},
\end{equation}
where $l_k$ is the edge length, $\Delta S_{k,q} = h^2\,\psi_k$ is the area correction from Lemma~\ref{lemma_1}, and the last inequality encodes the discrete Lipschitz regularity of $\psi$.
\end{assumption}

\begin{theorem}[Asymptotic properties of $\delta\mathbf{u}_q$] \label{thm:asymptotic} 
Under Assumption~\ref{assump:mesh}, suppose the exact velocity field $\mathbf{u}\in [C^4(\Omega)]^2$. Then there exist constants $C > 0$ and
$h_0 > 0$, independent of the node index, such that for all $h \le h_0$ the following estimates of $\delta\mathbf{u}_q$ hold uniformly for all interior nodes $q$ of $\mathcal{G}_h$:
% Let $\{\mathcal{G}_h\}_{h>0}$ be a family of shape-regular quadrilateral meshes of a bounded domain $\Omega \subset \mathbb{R}^2$. Suppose the exact velocity field $\mathbf{u} \in [C(\Omega)]^2$. {......} Consider the nodal velocity correction $\delta\mathbf{u}_q \in \mathbb{R}^2$ as the solution of the linear system:
% \begin{equation}
% \mathcal{M}_q \delta\mathbf{u}_q = \mathbf{b}_q, \qquad \mathcal{M}_q = \sum_{q'\in\mathcal{N}(q)} l_k \mathbf{n}_k \mathbf{n}_k^\top, \quad \mathbf{b}_q =
%   \sum_{q'\in\mathcal{N}(q)} l_k \Delta S_{k,q} \mathbf{n}_k, \quad k = qq'.
% \end{equation}
% Then there exists a constant $C > 0$, independent of $h$ and $q$, such that the following estimates hold uniformly for all interior nodes $q$ and edges $k$ of ${\mathcal{G}_h}$:
\begin{enumerate}
\item (Magnitude of the correction) \begin{equation}\|\delta\mathbf{u}_q\|\leq C h^2. \end{equation}
\item (Smoothness of the correction) For any two adjacent nodes $q$ and $q'$ sharing an edge $k=qq'$,
\begin{equation}
\|\delta\mathbf{u}_q-\delta\mathbf{u}_{q'} \| \leq C h^3.
\end{equation}
\item (High-order guarantee) For any edge $k = qq'$,
\begin{equation}
    \left| \frac{1}{2}\mathbf{n}_k \cdot \bigl(\delta\mathbf{u}_q + \delta\mathbf{u}_{q'}\bigr)-\Delta S_{k,q} \right| \leq C h^4.
\end{equation}
\end{enumerate}
\end{theorem}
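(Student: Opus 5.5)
The plan is to reduce everything to the explicit $2\times 2$ system (\ref{CAVEAT_modify1}) and to exploit the near-uniformity of $\mathcal{M}_q$ granted by Assumption~\ref{assump:mesh}.

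\textbf{Step 1: invertibility of $\mathcal{M}_q$.} For an interior node of a uniform quadrilateral mesh, the adjacent edges have lengths $l_k=h+O(h^3)$. Their normals come in nearly opposite pairs spanning two independent directions. Hence
\begin{equation*}
\mathcal{M}_q=h\,\mathcal{A}_q+O(h^3),
\end{equation*}
where $\mathcal{A}_q=\sum_k \mathbf{n}_k\mathbf{n}_k^\top$ is symmetric positive definite with eigenvalues bounded below by some $\lambda_0>0$ independent of $h$ and $q$. (For the Cartesian limit, $\mathcal{A}_q=2I$.) For $h\le h_0$, a Neumann series then gives $\|\mathcal{M}_q^{-1}\|\le C/h$ and $\mathcal{M}_q^{-1}=h^{-1}\mathcal{A}_q^{-1}+O(h)$.

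\textbf{Step 2: magnitude.} With $\Delta S_{k,q}=h^2\psi_k$, $|\psi_k|\le C_\psi$ and at most four edges, we get $\|\mathbf{b}_q\|\le 4(h+C_lh^3)h^2C_\psi\le Ch^3$. Therefore $\|\delta\mathbf{u}_q\|\le \|\mathcal{M}_q^{-1}\|\|\mathbf{b}_q\|\le Ch^2$, which is item 1.

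\textbf{Step 3: smoothness.} For adjacent nodes $q,q'$, I would write
\begin{equation*}
\delta\mathbf{u}_q-\delta\mathbf{u}_{q'}=\mathcal{M}_q^{-1}(\mathbf{b}_q-\mathbf{b}_{q'})+(\mathcal{M}_q^{-1}-\mathcal{M}_{q'}^{-1})\mathbf{b}_{q'}.
\end{equation*}
The edges around $q'$ correspond one-to-one to those around $q$ (same orientation class), and corresponding edges are adjacent, so the Lipschitz bound gives $|\psi_{k'}-\psi_k|\le C_\nabla h$. Together with $|l_k-l_{k'}|=O(h^3)$ and the fact that normals of corresponding edges differ by $O(h)$ on a smooth uniform family, this yields $\|\mathbf{b}_q-\mathbf{b}_{q'}\|\le Ch^4$. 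Likewise,
\begin{equation*}
\|\mathcal{M}_q^{-1}-\mathcal{M}_{q'}^{-1}\|\le \|\mathcal{M}_q^{-1}\|\,\|\mathcal{M}_q-\mathcal{M}_{q'}\|\,\|\mathcal{M}_{q'}^{-1}\|\le C h^{-1}.
\end{equation*}
Each term is therefore $O(h^3)$, which is item 2.

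\textbf{Step 4: high-order guarantee.} By item 2, it suffices to show $|\mathbf{n}_k\cdot\delta\mathbf{u}_q-h^2\psi_k|\le Ch^4$ up to the symmetric average. From Step 1, $\delta\mathbf{u}_q=h^2\mathcal{A}_q^{-1}\sum_j\psi_j\mathbf{n}_j+O(h^4)$. So I need the consistency statement
\begin{equation*}
\mathbf{n}_k\cdot\mathcal{A}_q^{-1}\sum_j\psi_j\mathbf{n}_j=\psi_k+O(h^2)
\end{equation*}
after averaging over the two endpoints of $k$. The approach is to identify $\psi_k$ with the sampling, at the edge $k$, of a smooth field. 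Using Lemma~\ref{lemma_1} and the $C^4$ regularity of $\mathbf{u}$, the Gauss-type correction in Table~\ref{table_guass} is $h^2$ times a second derivative of $\mathbf{u}\cdot\mathbf{n}_k$ plus $O(h^3)$. This gives $\psi_k=\mathbf{w}(\mathbf{x}_k)\cdot\mathbf{n}_k+O(h)$ for a smooth vector field $\mathbf{w}$. Since $\mathcal{A}_q^{-1}\sum_j(\mathbf{w}\cdot\mathbf{n}_j)\mathbf{n}_j=\mathbf{w}$ exactly, the leading term is reproduced. The $O(h)$ deviations, coming from evaluating $\mathbf{w}$ at the midpoints of the opposite edges around $q$ and $q'$, are odd with respect to the node. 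When the values at $q$ and $q'$ are averaged, these first-order terms cancel by a Taylor expansion to second order.

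This cancellation is the step I expect to be the main obstacle. The Lipschitz bound in Assumption~\ref{assump:mesh} alone only delivers $O(h^3)$ here. The extra order must come from the symmetric structure of the uniform mesh and from the $C^4$ smoothness of $\mathbf{u}$, so I would first carry out the expansion on the Cartesian mesh, where $\mathcal{A}_q=2I$ and the cancellation can be checked explicitly. I would then treat the $O(h^2)$ mesh deformation as a perturbation.
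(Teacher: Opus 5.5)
Items 1 and 2 follow the paper's proof. You use the same bounds $\|\mathbf b_q\|\le Ch^3$ and $\|\mathcal M_q^{-1}\|\le Ch^{-1}$, and for item 2 a splitting equivalent to the paper's $\mathcal M_q(\delta\mathbf u_{q'}-\delta\mathbf u_q)=-\Delta\mathcal M\,\delta\mathbf u_{q'}+(\mathbf b_{q'}-\mathbf b_q)$.

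There is one slip in Step 3. Your bound $\|\mathcal M_q^{-1}-\mathcal M_{q'}^{-1}\|\le Ch^{-1}$, times $\|\mathbf b_{q'}\|\le Ch^3$, gives only $O(h^2)$. Under your own hypotheses (lengths differ by $O(h^3)$, corresponding normals by $O(h)$) you actually have $\|\mathcal M_q-\mathcal M_{q'}\|\le Ch^2$. Hence $\|\mathcal M_q^{-1}-\mathcal M_{q'}^{-1}\|\le C$, and both terms are then $O(h^3)$. The paper instead assumes $O(h^2)$ normal variation, which is stronger than you need. Neither normal bound is contained in Assumption~\ref{assump:mesh}; both you and the paper use one tacitly.

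\textbf{Item 3: how your route differs.} The paper dots $\mathcal M_q\delta\mathbf u_q=\mathbf b_q$ with $\mathbf n_k$, isolates edge $k$, and adds the analogous identity at $q'$. It then claims the remaining edges cancel in pairs to $O(h^5)$, using only the Lipschitz bound on $\psi$.

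Your remark that Lipschitz regularity alone yields only $O(h^3)$ is correct, and it is exactly where that pairing fails. The summands $l_{qm}(\mathbf n_k\cdot\mathbf n_{qm})(\mathbf n_{qm}\cdot\delta\mathbf u_q)$ and $l_{qm}\Delta S_{qm,q}(\mathbf n_k\cdot\mathbf n_{qm})$ are even in the orientation of $\mathbf n_{qm}$. So the edge through $q$ opposite to $k$ and the edge through $q'$ beyond $k$ reinforce rather than cancel. For instance, with $\mathbf u=(0,x^2)$ on a Cartesian mesh, both $\mathbf n_k^\top(\mathbf b_q+\mathbf b_{q'})-2l_k\Delta S_{k,q}$ and $R_q+R_{q'}$ equal $-h^3/3$; they only balance each other.

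Your route expands $\mathcal M_q^{-1}=h^{-1}\mathcal A_q^{-1}+O(h)$ and ties $\psi$ to $\mathbf u$. That is the mechanism that actually makes item 3 true, and it is what the $C^4$ hypothesis is for. As written, though, it stops short of the decisive computation, and two points need fixing.
\begin{itemize}
\item Drop ``by item 2 it suffices''. The single-node defect $\mathbf n_k\cdot\delta\mathbf u_q-\Delta S_{k,q}$ is genuinely $O(h^3)$, so the two-endpoint average is essential, and item 2 is too coarse to help.
\item The expansion $\psi_k=\mathbf w\cdot\mathbf n_k+O(h)$ is too weak; you need an $O(h^2)$ remainder. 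Also, $\mathbf w$ must be defined family-wise, because the correction involves the second derivative along the edge.
\end{itemize}

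\textbf{Completing item 3 on a two-family stencil.} Both rules in Table~\ref{table_guass} are symmetric about the edge midpoint $\mathbf m_j$. So for $\mathbf u\in C^4$ the vector correction is $-\tfrac{l_j^2}{12}\partial^2_{\mathbf e_j}\mathbf u(\mathbf m_j)+O(h^4)$, with no $h^3$ term. Define $\mathbf w\in C^2$ by $\mathbf w\cdot\mathbf n_i=-\tfrac1{12}\partial^2_{\mathbf e_i}\mathbf u\cdot\mathbf n_i$ for the two edge families $i=1,2$. Then $\Delta S_j=h^2\,\mathbf w(\mathbf m_j)\cdot\mathbf n_j+O(h^4)$.

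On an exact two-family stencil, the linear Taylor terms of $\mathbf w(\mathbf m_j)$ cancel between opposite edges at a node. This is because $\mathbf m_j-\mathbf x_q$ changes sign while $\mathbf n_j\mathbf n_j^\top$ does not. Your identity $\mathcal A_q^{-1}\sum_j(\mathbf w\cdot\mathbf n_j)\mathbf n_j=\mathbf w$ then gives $\delta\mathbf u_q=h^2\mathbf w(\mathbf x_q)+O(h^4)$. Averaging over $q,q'$ and comparing with $h^2\mathbf w(\mathbf m_k)\cdot\mathbf n_k$ is a trapezoid-versus-midpoint comparison, hence $O(h^4)$. Equivalently, on a Cartesian mesh $\mathbf n_k\cdot\delta\mathbf u_q$ is exactly the average of the corrections on the two edges through $q$ parallel to $k$. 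The defect in item 3 is then one quarter of a second difference of these corrections along the grid line.

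\textbf{The perturbation step.} Length errors of size $O(h^3)$ are harmless. A normal deviation $\epsilon$, however, moves $\delta\mathbf u_q$ by $O(\epsilon h^2)$. With only the $O(h)$ normal variation you allowed in Step 3, this crude bound gives just $O(h^3)$. You therefore need one of two things:
\begin{itemize}
\item $O(h^2)$ closeness of the stencil normals to two fixed antiparallel pairs, which the paper tacitly assumes; or
\item a symmetric expansion showing that the $O(h)$ parts of $\mathbf n_j\mathbf n_j^\top$ cancel between opposite edges.
\end{itemize}
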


\begin{proof}
\medskip
\noindent\underline{\textbf{Proof of Part 1.}}
Each term in $\mathbf{b}_q$ satisfies
\begin{equation}
    \|l_k\,\Delta S_{k,q}\,\mathbf{n}_k\|
    = l_k\,h^2\,|\psi_k|
    \le (h+C_l\,h^3)\,h^2\,C_\psi
    \le 2\,C_\psi\,h^3,
\end{equation}
where the last step uses $C_l\,h^2\le 1$ for $h\le h_0$ with a sufficiently small $h_0$. Summing over the four edges of $\mathcal{N}(q)$:
\begin{equation}\label{eq:b_bound}
    \|\mathbf{b}_q\| \le 8\,C_\psi\,h^3.
\end{equation}
%Combining with \eqref{eq:Minv_bound}:
Since $\mathcal{M}_q$ is a symmetric positive definite matrix formed by summing four rank-one projections $l_k\mathbf{n}_k\mathbf{n}_k^\top$ with $l_k \sim h$ and the normals spanning $\mathbb{R}^2$, standard spectral analysis shows that its eigenvalues satisfy $c_1 h \le \lambda_{\min}(\mathcal{M}_q) \le \lambda_{\max}(\mathcal{M}_q) \le c_2 h$ for constants $c_1, c_2 > 0$ depending only on the mesh regularity. Thus $\|\mathcal{M}_q^{-1}\| \le c_1^{-1}h^{-1}$. Then
\begin{equation}\label{eq:prop1_est}
    \|\delta\mathbf{u}_q\|
    \le \|\mathcal{M}_q^{-1}\|\,\|\mathbf{b}_q\|
    \le c_1^{-1}\,h^{-1}\cdot 8\,C_\psi\,h^3
    = 8\,c_1^{-1}\,C_\psi\,h^2 := K_1 h^2.
\end{equation}

\medskip
\noindent\underline{\textbf{Proof of Part 2.}}
For any two adjacent nodes $q$ and $q'$, write $\mathcal{M}_{q'} = \mathcal{M}_q+\Delta\mathcal{M}$. Then, subtracting $\mathcal{M}_{q'}\,\delta\mathbf{u}_{q'} = \mathbf{b}_{q'}$ and $\mathcal{M}_q\,\delta\mathbf{u}_q = \mathbf{b}_q$:
\begin{equation}\label{eq:diff_system}
    \mathcal{M}_q\bigl(\delta\mathbf{u}_{q'}-\delta\mathbf{u}_q\bigr)
    = \underbrace{-\Delta\mathcal{M}\,\delta\mathbf{u}_{q'}}_{\text{Term\;I}}
    \;+\;\underbrace{(\mathbf{b}_{q'}-\mathbf{b}_q)}_{\text{Term\;II}}.
\end{equation}
\emph{\textbf{Estimate of Term~I.}}
%The nodes $q$ and $q'$ shares the edge $k=qq'$, and the remaining adjacent edges of each node are distinct. 
On a uniform mesh, for each edge appearing in both neighborhoods the variation of $l_k$ and $\mathbf{n}_k$ over one mesh spacing satisfies $|l_k^{(q')}-l_k^{(q)}|\le C_l'\,h^3$ and $\|\mathbf{n}_k^{(q')}-\mathbf{n}_k^{(q)}\|\le C_n'\,h^2$,
where $C_l'$ and $C_n'$ depend only on the reference geometry and the
smoothness of $\mathbf{u}$. Each such edge contributes at most $(C_l'+C_n')\,h^3$ to $\|\Delta\mathcal{M}\|$. The edges unique to each neighborhood yield contributions of the same magnitude. With at most eight edges involved:
\begin{equation}\label{eq:DM_bound}
    \|\Delta\mathcal{M}\| \le 8(C_l'+C_n')\,h^3 =: C_{\Delta M}\,h^3.
\end{equation}
By \textbf{Part 1}:
\begin{equation}\label{eq:term1}
    \|\Delta\mathcal{M}\,\delta\mathbf{u}_p\|
    \le C_{\Delta M}\,h^3\cdot K_1\,h^2
    = K_1\,C_{\Delta M}\,h^5.
\end{equation}
\emph{\textbf{Estimate of Term~II}.}
On a uniform mesh, for each edge $k$ incident to $q$ there exists a unique
edge $k'$ incident to $q'$ with nearly parallel normal. Since $q$ and $q'$ are separated by distance $h$, the smoothness of the mesh mapping and the discrete Lipschitz condition \eqref{eq:mesh_prop} give
\begin{equation}
    \|\mathbf{n}_{k'}-\mathbf{n}_k\| \le C_n\,h^2, \qquad
    |l_{k'}-l_k| \le C_l\,h^3, \qquad
    |\Delta S_{k',q'}-\Delta S_{k,q}| \le C_\nabla\,h^3.
\end{equation}
Writing $\mathbf{b}_{q'}-\mathbf{b}_q$ as a sum over these four pairs and expanding each difference:
\begin{equation}
    l_{k'}\,\Delta S_{k',q'}\,\mathbf{n}_{k'}
    - l_k\,\Delta S_{k,q}\,\mathbf{n}_k
    = \bigl(l_{k'}\,\Delta S_{k',q'}-l_k\,\Delta S_{k,q}\bigr)\mathbf{n}_{k'}
    + l_k\,\Delta S_{k,q}\,(\mathbf{n}_{k'}-\mathbf{n}_k).
\end{equation}
The first factor satisfies
  $|l_{k'}\Delta S_{k',q'}-l_k\Delta S_{k,q}|
  \le 2h\cdot C_\nabla h^3 + C_l h^3\cdot C_\psi h^2
  \le C_1\,h^4$;
the second term is bounded by $2h\cdot C_\psi h^2\cdot C_n h^2 := C_2\,h^5$.
Each pair thus contributes at most $(C_1+C_2)\,h^4$, and summing over four
pairs:
\begin{equation}\label{eq:term2}
    \|\mathbf{b}_{q'}-\mathbf{b}_q\| \le 4(C_1+C_2)\,h^4 =: C_b\,h^4.
\end{equation}
With a sufficiently small $h_0$ such that  $K_1\,C_{\Delta M}\,h \le 1$, by \textbf{Part 1}, ~\eqref{eq:term1} and ~\eqref{eq:term2}
\begin{equation}
\|\delta\mathbf{u}_{q'}-\delta\mathbf{u}_{q} \| \leq \frac{C_b+1}{c_1} h^3 := K_2 h^3.
\end{equation}
\medskip
\noindent\underline{\textbf{Proof of \textbf{Part 3}.}}
For any edge $k=qq'$, dotting \eqref{CAVEAT_modify1} at node $q$ with $\mathbf{n}_k$ and isolating edge $k$ (where $\mathbf{n}_k\cdot\mathbf{n}_{k}=1$):
\begin{equation}
    \mathbf{n}_k^\top\mathbf{b}_q
    = l_{k}\,(\mathbf{n}_k\cdot\delta\mathbf{u}_q) + R_q,
    \qquad
    R_q := \sum_{\substack{m\in\mathcal{N}(q),\ m\neq q'}}
      l_{qm}(\mathbf{n}_k\cdot\mathbf{n}_{qm})(\mathbf{n}_{qm}\cdot\delta\mathbf{u}_q).
\end{equation}
Adding the analogous identity at $q'$ gives
\begin{equation}\label{eq:sum_iii}
  \mathbf{n}_k^\top(\mathbf{b}_q+\mathbf{b}_{q'})
    = l_{k}\,\mathbf{n}_k\cdot(\delta\mathbf{u}_q+\delta\mathbf{u}_{q'})
    + (R_q+R_{q'}).
\end{equation}
\begin{itemize}
\item \emph{\textbf{Left-hand side:}}\; 
\end{itemize}
\begin{equation}
    \mathbf{n}_k^\top(\mathbf{b}_q+\mathbf{b}_{q'}) = \sum_{m\in \mathcal{N}(q)} l_{qm} \Delta S_{qm,q} \mathbf{n}_k^\top\mathbf{n}_{qm} + \sum_{m'\in \mathcal{N}(q')} l_{q'm'} \Delta S_{q'm',q'} \mathbf{n}_k^\top\mathbf{n}_{q'm'}
\end{equation}
Edge $k$ contributes $2\,l_{k}\,\Delta S_{k,q}$ to $\mathbf{n}_{k}^\top(\mathbf{b}_q+\mathbf{b}_{q'})$. For the remaining edges, pair each $m\in\mathcal{N}(q)\setminus\{q'\}$ with its counterpart $m'\in\mathcal{N}(q')\setminus\{q\}$ having nearly anti-parallel normal projection: $|(\mathbf{n}_{qm}\cdot\mathbf{n}_k)+(\mathbf{n}_{q'm'}\cdot\mathbf{n}_k)| \le C_n\,h^2$. By the same smoothness estimates as in~\textbf{Part 2}, $|l_{q'm'}\Delta S_{q'm',q'}-l_{qm}\Delta S_{qm,q}|\le C_1\,h^4$. Each pair then contributes at most $C_\psi h^2\cdot 2h\cdot C_n h^2 + C_1 h^4 \le C_3\,h^5$. Summing over three pairs:
\begin{equation}\label{eq:lhs_iii}
    \bigl|\mathbf{n}_k^\top(\mathbf{b}_q+\mathbf{b}_{q'})
    - 2\,l_{k}\,\Delta S_{k,q}\bigr| \le 3\,C_3\,h^5.
\end{equation}
\begin{itemize}
\item \emph{\textbf{Residual $R_q+R_{q'}$:}}\; 
\end{itemize}
Write $\delta\mathbf{u}_{q'}=\delta\mathbf{u}_q+\mathbf{e}$ with
$\|\mathbf{e}\|\le K_2\,h^3$ by \textbf{Part 2}. Then 
\begin{align}
    R_q + R_{q'}
    &= \sum_{\substack{m\in\mathcal{N}(q),\ m\neq q'}}
        l_{qm}\,(\mathbf{n}_k\cdot\mathbf{n}_{qm})\,
        (\mathbf{n}_{qm}\cdot\delta\mathbf{u}_q)
      +\sum_{\substack{m'\in\mathcal{N}(q'),\ m'\neq q}}
        l_{q'm'}\,(\mathbf{n}_k\cdot\mathbf{n}_{q'm'})\,
        (\mathbf{n}_{q'm'}\cdot\delta\mathbf{u}_{q'})
    \notag\\[4pt]
    &= \underbrace{
        \sum_{\substack{m\in\mathcal{N}(q),\ m\neq q'}}
        l_{qm}\,(\mathbf{n}_k\cdot\mathbf{n}_{qm})\,
        (\mathbf{n}_{qm}\cdot\delta\mathbf{u}_q)
      +\sum_{\substack{m'\in\mathcal{N}(q'),\ m'\neq q}}
        l_{q'm'}\,(\mathbf{n}_k\cdot\mathbf{n}_{q'm'})\,
        (\mathbf{n}_{q'm'}\cdot\delta\mathbf{u}_{q})
      }_{=:\,\Sigma}
    \notag\\[4pt]
    &\quad+\;\underbrace{
        \sum_{\substack{m'\in\mathcal{N}(q'),\ m'\neq q}}
          l_{q'm'}\,(\mathbf{n}_k\cdot\mathbf{n}_{q'm'})\,
          (\mathbf{n}_{q'm'}\cdot\mathbf{e})
      }_{=:\,E'}.
    \label{eq:Rsum_split}
\end{align}
where $\Sigma$ collects terms in $\delta\mathbf{u}_q$ and $|E'|\le 3\cdot 2h\cdot K_2 h^3 = 6K_2\,h^4$. The sum $\Sigma$ pairs in the same way: using $|(\mathbf{n}_{qm}\cdot\mathbf{n}_k)+(\mathbf{n}_{q'm'}\cdot\mathbf{n}_k)| \le C_n\,h^2$,
$|l_{q'm'}-l_{qm}|\le C_l h^3$, and $\|\mathbf{n}_{q'm'}-\mathbf{n}_{qm}\|\le C_n h^2$, each pair contributes at most $C_4\,K_1\,h^5$. Hence
$|\Sigma|\le 3\,C_4\,K_1\,h^5$. The term $E'$ admits the same pairing with $K_2 h^3$ replacing $K_1 h^2$, giving $|E'|\le 3\,C_4\,K_2\,h^6$. Altogether,
\begin{equation}\label{eq:Rsum_iii}
    |R_q+R_{q'}| \le C_5\,h^5.
\end{equation}
\begin{itemize}
\item \emph{\textbf{Combining:}}\; 
\end{itemize}
  Substituting \eqref{eq:lhs_iii} and \eqref{eq:Rsum_iii} into
  \eqref{eq:sum_iii} and dividing by $l_{k}\ge h/2$:
\begin{equation}
   \bigl|\mathbf{n}_k\cdot(\delta\mathbf{u}_q+\delta\mathbf{u}_{q'})
    - 2\,\Delta S_{k,q}\bigr|
    \le \frac{3C_3\,h^5 + C_5\,h^5}{h/2}
    = 2(3C_3+C_5)\,h^4 =: K_3\,h^4.
\end{equation}

Then, let $C = \max (K_1, K_2, K_3/2)$, the proof is completed.
\end{proof}

\subsection{Algorithm workflow}
% Runge Kutta

Building on the area correction analyzed in Section~\ref{area-correction} and the asymptotic properties established in Theorem~\ref{thm:asymptotic}, we now assemble the complete algorithm workflow of our high-order rectilinear Lagrangian method. Time-stepping is performed through a fourth-order Runge-Kutta scheme.

\begin{algorithm}[H]
  \SetAlgoLined
  \caption{High-order rectilinear Lagrangian scheme based on the GCL}
  \label{alg:main}
  \SetKwFunction{Reconstruct}{ReconstructVelocity}
  \SetKwProg{Fn}{Function}{:}{end}

  \KwIn{initial node positions $\{\mathbf{x}_q^{0}\}$, velocity field $\mathbf{u}(\mathbf{x})$, final time $T$, CFL number}
  \KwOut{node positions $\{\mathbf{x}_q^{N}\}$ at $t=T$}
  \BlankLine

  $t \leftarrow 0$,\quad $\mathbf{x}_q \leftarrow \mathbf{x}_q^{0}$\;
  \While{$t < T$}{
    $\Delta t \leftarrow \min\!\bigl(\mathrm{CFL}\cdot h_{\min}/v_{\max},\;T-t\bigr)$\;
    $\{\mathbf{k}_{1,q}\} \leftarrow \Reconstruct(\{\mathbf{x}_q\},\mathbf{u})$\;
    $\{\mathbf{k}_{2,q}\} \leftarrow \Reconstruct(\{\mathbf{x}_q + \tfrac{\Delta t}{2}\mathbf{k}_{1,q}\},\mathbf{u})$\;
    $\{\mathbf{k}_{3,q}\} \leftarrow \Reconstruct(\{\mathbf{x}_q + \tfrac{\Delta t}{2}\mathbf{k}_{2,q}\},\mathbf{u})$\;
    $\{\mathbf{k}_{4,q}\} \leftarrow \Reconstruct(\{\mathbf{x}_q + \Delta t\,\mathbf{k}_{3,q}\},\mathbf{u})$\;
    $\mathbf{x}_q \leftarrow \mathbf{x}_q + \tfrac{\Delta t}{6}\bigl(\mathbf{k}_{1,q}+2\mathbf{k}_{2,q}+2\mathbf{k}_{3,q}+\mathbf{k}_{4,q}\bigr)$\;
    $t \leftarrow t + \Delta t$\;
  }
  \Return $\{\mathbf{x}_q\}$\;

  \BlankLine
  \Fn{\Reconstruct{$\{\mathbf{x}_q\},\,\mathbf{u}$}}{
    \ForEach{node $q$}{
      $\mathcal{M}_q \leftarrow \mathbf{0}$,\quad $\mathbf{b}_q \leftarrow \mathbf{0}$\;
      \ForEach{neighbor $q' \in \mathcal{N}(q)$}{
        $\mathbf{e}_k \leftarrow \mathbf{x}_{q'} - \mathbf{x}_q$,\quad $l_k \leftarrow \|\mathbf{e}_k\|$\;
        $\mathbf{n}_k \leftarrow l_k^{-1}(-e_{k,y},\,e_{k,x})^\top$,\quad $\mathbf{T}_k \leftarrow \mathbf{n}_k\mathbf{n}_k^\top$\;
        $\mathbf{v}_k \leftarrow \mathcal{Q}_{q,q'}[\mathbf{u}]$\quad\text{(see Eq.~\eqref{Lobatto} or Eq.~\eqref{Legendre})}\;
        $\mathcal{M}_q \leftarrow \mathcal{M}_q + l_k\,\mathbf{T}_k$,\quad $\mathbf{b}_q \leftarrow \mathbf{b}_q + l_k\,\mathbf{T}_k\,\mathbf{v}_k$\;
      }
      Solve $\mathcal{M}_q\,\delta\mathbf{u}_q = \mathbf{b}_q$\;
    }
    \Return $\{\delta\mathbf{u}_q\}$\;
  }
\end{algorithm}

\begin{remark}[Choice of edge quadrature]\label{rem:quad}
  The edge-aware velocity $\mathbf{v}_k = \mathcal{Q}_{q,q'}[\mathbf{u}]$ in
  Algorithm~\ref{alg:main} can in fact be defined by any Gauss-type rule. Two natural candidates are listed in
  Table~\ref{table_guass}: the three-point Gauss-Lobatto formula
  \begin{equation}\label{Lobatto}
    \mathcal{Q}_{q,q'}^{\mathrm{GL}}[\mathbf{u}]
    = \tfrac{2}{3}\,\mathbf{u}(\mathbf{x}_q)
    + \tfrac{2}{3}\,\mathbf{u}\!\left(\tfrac{\mathbf{x}_q+\mathbf{x}_{q'}}{2}\right)
    - \tfrac{1}{3}\,\mathbf{u}(\mathbf{x}_{q'}),
  \end{equation}
  and the two-point Gauss-Legendre formula
  \begin{equation}\label{Legendre}
    \mathcal{Q}_{q,q'}^{\mathrm{GL\!eg}}[\mathbf{u}]
    = \tfrac{1}{2}\,\mathbf{u}(\mathbf{x}_q)
    + \tfrac{1}{2}\,\mathbf{u}(\mathbf{t}_1)
    + \tfrac{1}{2}\,\mathbf{u}(\mathbf{t}_2)
    - \tfrac{1}{2}\,\mathbf{u}(\mathbf{x}_{q'}),
  \end{equation}
  with $\mathbf{t}_{1,2} = \tfrac{\mathbf{x}_q+\mathbf{x}_{q'}}{2}
  \mp \tfrac{\mathbf{x}_{q'}-\mathbf{x}_q}{2\sqrt{3}}$.
  A direct Taylor expansion shows that both rules cancel the constant and the
  first-order terms and yield the \emph{identical} leading error
  \begin{equation}\label{eq:edge_quad_err}
    \mathcal{Q}_{q,q'}[\mathbf{u}]
    \;=\; \mathbf{u}(\mathbf{x}_q)
    \;-\; \tfrac{h^2}{12}\,\partial_{\mathbf{e}_k}^{2}\mathbf{u}(\mathbf{x}_q)
    \;-\; \tfrac{h^3}{24}\,\partial_{\mathbf{e}_k}^{3}\mathbf{u}(\mathbf{x}_q)
    \;+\; O(h^4).
  \end{equation}
  This demonstrates that Assumption~\ref{assump:mesh} is well-founded. Consequently, the asymptotic accuracy of the resulting Lagrangian scheme is
  \emph{independent of which of these Gauss-type rules is adopted}, and
  Theorem~\ref{thm:asymptotic} applies to either choice.
  %We use the Gauss--Lobatto form throughout this paper because the edge midpoint $(\mathbf{p}_q+\mathbf{p}_{q'})/2$ is a natural geometric quantity in the rectilinear setting---it coincides with the centroid of the edge-sweep volumes that appear in the discrete GCL identity, so no additional interpolation is needed when the mesh deforms. The Gauss--Legendre nodes $\mathbf{t}_{1,2}$, in contrast, are interior to the edge and require evaluating $\mathbf{v}$ at non-geometric locations that move with the mesh.
\end{remark}
% Delta S 只需要二阶精度(假设 1)，第一种用的积分点更少。

\section{Numerical experiments}\label{secr}
Since any discrete scheme or reconstruction method impacts the accuracy of the numerical scheme, we temporarily avoid using them to obtain the contact velocities at each point along the edges of the cells, opting instead to compute these velocities directly from the exact solution. This also allows us to demonstrate the feasibility of our method more quickly. This section presents two test problems that together validate the proposed high-order rectilinear Lagrangian scheme. Additionally, we compare the two Gauss-type edge quadratures introduced in Remark~\ref{rem:quad}.

\subsection{Smooth isentropic vortex}
\label{sec:vortex_test}
To assess the geometric conservation property of the proposed scheme, we consider a smooth two-dimensional vortex defined on $\Omega = [-10,10]^2$ by the analytic velocity field
\begin{equation}\label{eq:vortex_velocity}
    \mathbf{u}(x,y)
    \;=\; \frac{5}{2\pi}\,\bigl(-y,\;x\bigr)\,
          \exp\!\left(\frac{1 - x^2 - y^2}{2}\right).
\end{equation}
A direct calculation gives $\nabla\cdot\mathbf{u} \equiv 0$, hence the flow is incompressible and every Lagrangian cell preserves its area along trajectories. Setting the initial density to $\rho_0\equiv 1$, the exact solution is
\begin{equation}\label{eq:exact_density}
    \rho(x, y,t) \;\equiv\; 1, \qquad t \in [0, T].
\end{equation}
Then, with the numerical density $\rho_c^n = m_c / |\Omega_{c}^n|$ on cell $c$ at time $t^n$, we calculate the $L_\infty$ and $L_2$ Errors as  
\begin{equation}\label{err}
  \mathrm{Err}(L_\infty) := \max_c |\rho_c^n - 1|,\quad \mathrm{Err}(L_2):=\sqrt{\sum_{c}(\rho_c^n - 1)^{2}|\Omega_{c}^n|},
\end{equation}
and report the results at $t=0.1$ in Table~\ref{tab:convergence}. Both Gauss-type quadratures achieve fourth-order convergence, confirming the theoretical equivalence established in Remark~\ref{rem:quad}.
\begin{table}[!ht]
    \centering
    \caption{Numerical convergence results of smooth isentropic vortex at $t= 0.1$.}
    \label{tab:convergence}
    \vspace{0.5em}
    \begin{tabular}{l ccccl cccc}
        \toprule
        & \multicolumn{4}{c}{Gauss-Lobatto} & & \multicolumn{4}{c}{Gauss-Legendre} \\
        \cmidrule(lr){2-5} \cmidrule(lr){7-10}
        $h$ & $L_\infty$ & Order & $L_2$ & Order & & $L_\infty$ & Order & $L_2$ & Order\\
        \midrule
        0.4 &2.0771E-04  & ---  &2.6943E-05 & --- & &2.1284E-04  & --- &2.7555E-05 & ---  \\
        0.2 &1.5225E-05  &3.77 &1.8358E-06 &3.88 & &1.5651E-05  &3.77 &1.8830E-06 &3.87  \\
        0.1 &9.8058E-07  &3.96 &1.1726E-07 &3.97 & &1.0089E-06  &3.96 &1.2036E-07 &3.97 \\
        0.05 &6.2154E-08  &3.98 &7.3689E-09 &3.99 & &6.3964E-08  &3.98 &7.5653E-09 &3.99  \\
        \bottomrule
    \end{tabular}
\end{table}

\begin{figure}[!ht]
\centering  %图片全局居中
\begin{subfigure}
\centering
\includegraphics[width=6.5cm]{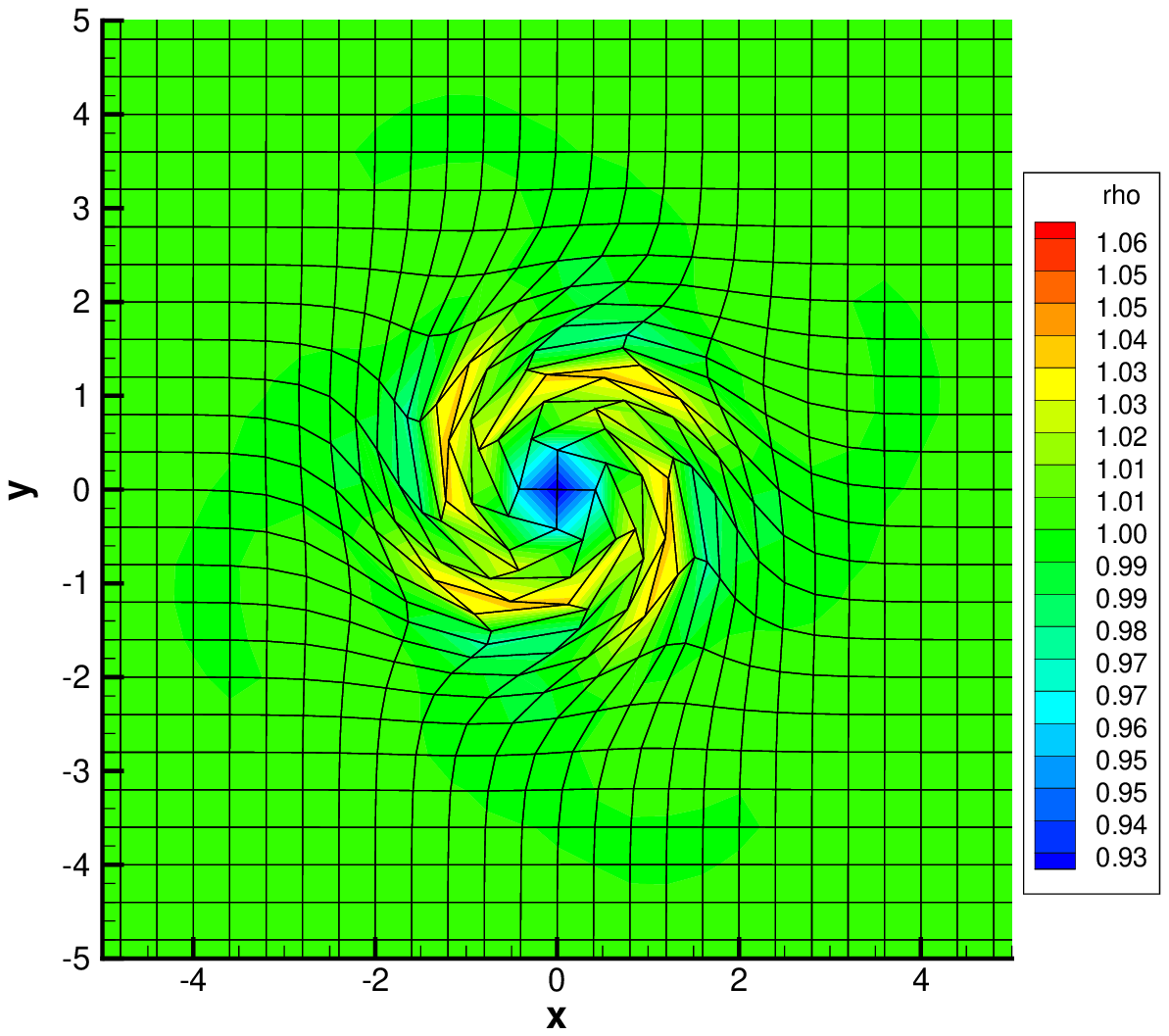}
\end{subfigure}
\qquad \qquad
\begin{subfigure}
\centering
\includegraphics[width=6.5cm]{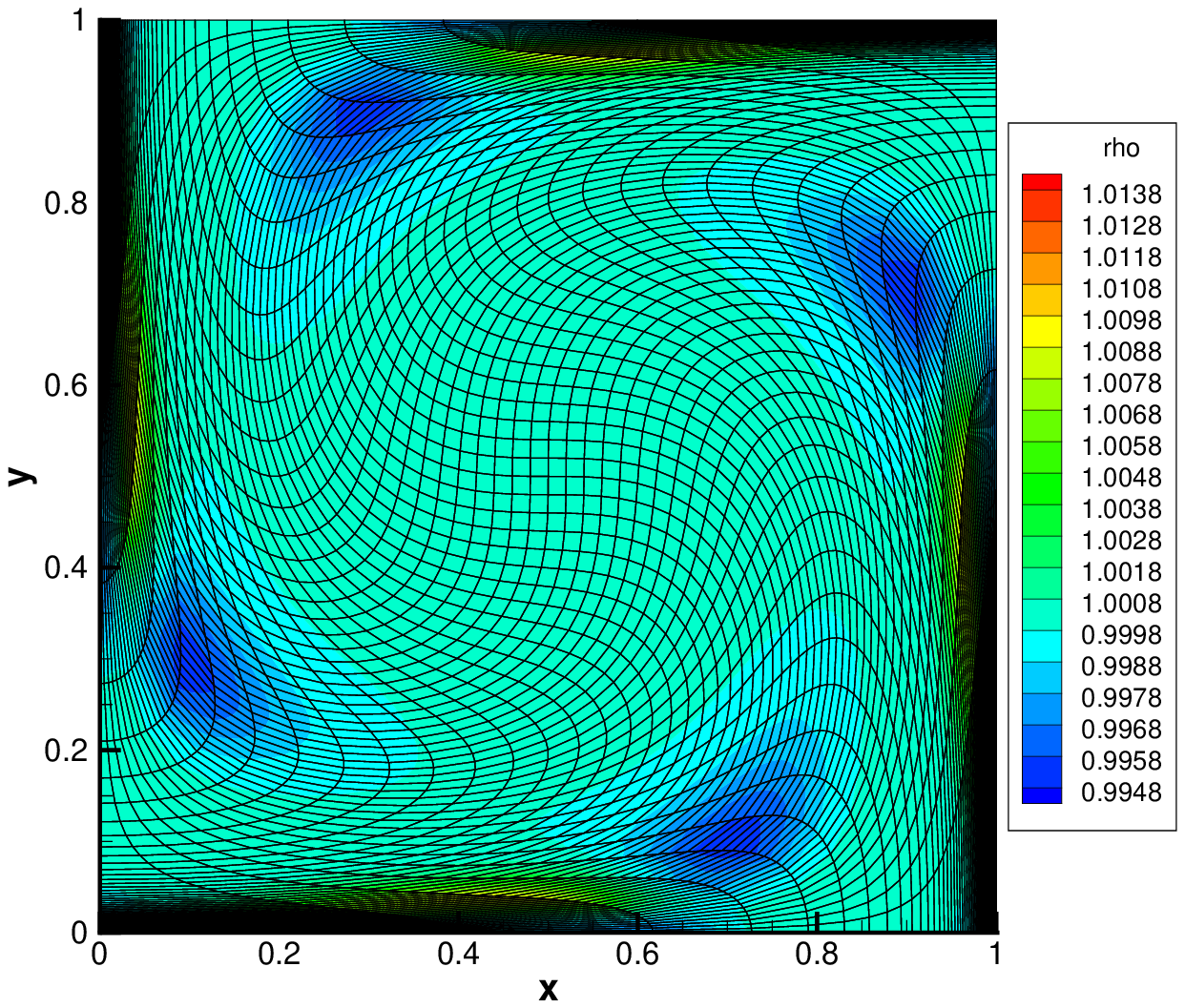}
\end{subfigure}
\caption{Two test cases with 2, 500 quadrilateral meshes. Left: the smooth isentropic vortex ($[-10, 10]^2$, $t=2.5$); Right: the Taylor-Green vortex ($[0, 1]^2$, $t=1.0$).}
\label{Fig.T1}
\end{figure}

\subsection{Taylor-Green vortex}
We then cosider the Taylor-Green vortex flow, which represents a steady-state solution of the incompressible Navier-Stokes equations \cite{Boscheri}. The computational domain is $\Omega = [0, 1]^2$ with periodic boundary conditions imposed on every side. The analytic velocity field is given by
\begin{equation}\label{eq:tg_velocity}
    \mathbf{u}(x,y)
    \;=\; (\sin(\pi x)\cos(\pi y), -\cos(\pi x)\sin(\pi y)).
\end{equation}
Setting the initial density to $\rho_0\equiv 1$. As the flow is divergence-free, the exact density remains constant:
\begin{equation}\label{eq:tg_exact}
      \rho(x, y, t) \;\equiv\; 1, \qquad t \in [0, T].
\end{equation}
Unlike the previous test, the velocity field \eqref{eq:tg_velocity} does not decay at the boundary. This configuration tests the scheme's ability to preserve geometric conservation under periodic boundary conditions. We compute the $L_\infty$ and $L_2$ errors using \eqref{err} and report the results at $t=0.1$ in Table~\ref{tab:tg_convergence}. Both Gauss-type quadratures again achieve fourth-order convergence, demonstrating that the scheme maintains its high-order accuracy under periodic boundary conditions.

\begin{table}[!ht]
    \centering
    \caption{Numerical convergence results of Taylor-Green vortex at $t= 0.1$.}
    \label{tab:tg_convergence}
    \vspace{0.5em}
    \begin{tabular}{l ccccl cccc}
        \toprule
        & \multicolumn{4}{c}{Gauss-Lobatto} & & \multicolumn{4}{c}{Gauss-Legendre} \\
        \cmidrule(lr){2-5} \cmidrule(lr){7-10}
        $h$ & $L_\infty$ & Order & $L_2$ & Order & & $L_\infty$ & Order & $L_2$ & Order\\
        \midrule
        0.04 &1.3362E-06  & ---  &4.5332E-07 & --- & &1.2860E-06  & --- &4.5243E-07 & ---  \\
        0.02 &8.3831E-08  &3.99 &2.8585E-08 &3.99 & &8.0714E-08  &3.99 &2.8539E-08 &3.99  \\
        0.01 &5.2768E-09  &3.99 &1.7905E-09 &4.00 & &5.0829E-09  &3.99 &1.7878E-09 &4.00 \\
        0.005 &3.3307E-10  &3.99 &1.1198E-10 &4.00 & &3.1974E-10  &3.99 &1.1181E-10 &4.00  \\
        \bottomrule
    \end{tabular}
\end{table}

\section{Conclusions}
We propose an effective mesh moving strategy that enables high-order Lagrangian methods on quadrilateral meshes. The method is supported by rigorous theoretical analysis and validated through numerical experiments. We believe this approach will advance research in computational fluid dynamics algorithms.
%The feasibility of the method stems from the perfect combination of solid theoretical foundations and numerical results. We believe that this method will bring new dimensions to the research of computational fluid dynamics algorithms in the future. 

In fact, we have been highly motivated to find a high-order discrete numerical scheme that can serve as a vehicle for our method. Meanwhile, we will integrate this method with the traditional Riemann solver to overcome complicated challenges like shock waves. Also, it should be pointed out that our goal is not to develop a high-order mesh moving method that outperforms others in every aspect, but rather to provide a reliable and theoretically grounded option for numerical simulations within the Lagrangian framework. %but to provide a trustworthy option for numerical simulations within the Lagrangian framework. 

\section*{Declaration of competing interest}
The authors declare that they have no known competing financial interests or personal relationships that could have
appeared to influence the work reported in this paper.

\section*{Data availability}
No data was used for the research described in the article.

\section*{Acknowledgements}
{
% We thank anonymous referees for their insightful comments,
% constructive feedback, and valuable suggestions that have significantly improved the quality and clarity of this manuscript. 

This project was supported by National Natural Science Foundation of China (Grant No. 12501573).
% and China Postdoctoral Science Foundation (Grant No. 2024M760059).
}

% \section*{References}

 \end{document}